\documentclass[a4paper,12pt,oneside]{article}
\usepackage{hyperref}
\usepackage[T1]{fontenc}

\usepackage{authblk} 
\usepackage{mathtools}
\usepackage{mathrsfs}
\usepackage{amssymb}
\usepackage{xifthen}
\usepackage{tikz}
\usepackage{amsmath}
\usepackage{amsthm}
\usepackage{mathrsfs}
\usepackage{latexsym}
\usepackage[all]{xy}
\usepackage{epsfig}
\usepackage{amscd,mathrsfs,graphicx,mathrsfs}

\usepackage{amsthm,amsmath,amsfonts,amssymb}
\usepackage{xcolor}
   \usepackage{geometry}
\newtheorem{theorem}{Theorem}[section]

\newtheorem{lemma}[theorem]{Lemma}
\newtheorem{corollary}[theorem]{Corollary}
\newtheorem{remark}[theorem]{Remark}
\newtheorem{conjecture}[theorem]{Conjecture}

\newgeometry{vmargin={28mm}, hmargin={20mm,17mm}}   

\makeatletter

\hypersetup{
    colorlinks,
    linkcolor={red},
    citecolor={green},
    urlcolor={blue}
}

\newtheorem*{theorem*}{Theorem}

\usepackage{verbatim}
\usepackage{xcolor}
\usepackage{authblk}
\hypersetup{
    colorlinks,
    linkcolor={orange},
    citecolor={magenta},
    urlcolor={blue}
}

\title{Proof of the Noferini--Williams conjecture for Gilbert--Howie groups}

\author[1]{Ihechukwu Chinyere\thanks{Corresponding author: \texttt{i.chinyere@up.ac.za; ihechukwu@aims.ac.za}}}
\affil[1]{Department of Mathematics and Applied Mathematics, University of Pretoria, Hatfield 0028, Pretoria, South Africa}

\begin{document}
\maketitle

\begin{abstract}
\noindent
The Gilbert–Howie groups $H(n, m)$ form a notable subclass within the broader family of Fibonacci-type cyclically presented groups $G_n(m, k)$. Noferini and Williams conjectured that the abelianization $H(n, m)^{ab}$ is torsion-free with \(\mathbb{Z}\)-rank $2$ if and only if $n \equiv 0 \pmod{6}$ and $m \equiv 2 \pmod{n}$. We confirm this conjecture by proving that \[ \operatorname{Res}(F, G) = \pm 1, \] where \[ F = \frac{1 + t^m - t}{\Phi_6} \quad \text{and} \quad G = \frac{t^n - 1}{\Phi_6}, \] with $\Phi_6$ denoting the sixth cyclotomic polynomial. The proof uses a minimality argument, reducing the general problem to three cases: $m = 2 + \frac{n}{3}$, $m = 2 + \frac{n}{2}$, and $m = 2 + \frac{2n}{3}$.
These cases are then handled using polynomial resultant analysis and field-theoretic methods.
 As a consequence, we complete the classification of all $G_n(m, k)$ that arise as labelled oriented graph groups.

\vspace{1em}
\noindent\textbf{Keywords:} Cyclically presented groups; abelianization;
circulant matrices; resultants; cyclotomic polynomials; cyclotomic fields; Lucas numbers, LOG groups.

\noindent\textbf{MSC (2020):} 20F05, 20E06, 11C08, 11R09.
\end{abstract}

\section{Introduction}
Cyclically presented groups are finitely presented groups whose relations are invariant under cyclic permutation of generators. Since their introduction, they have provided a setting uniting combinatorial group theory, number theory, and low-dimensional topology. A key theme is expressing algebraic invariants in terms of arithmetic data from the presentation.

\medskip
A particularly important family is given by the Fibonacci-type cyclically presented
groups
\begin{equation}\label{eq:Gnmk}
G_n(m,k)
=
\langle x_0,\dots,x_{n-1}
\mid
x_i x_{i+m}=x_{i+k}\ (0\le i<n)
\rangle,
\qquad (\text{indices mod } n).
\end{equation}

These groups were first discussed in \cite{MR1634446,MR387419} (see also the survey \cite{MR3010816}). They include the classical Fibonacci groups $F(2,n)=G_n(1,2)$\cite{Conway1965}, the Sieradski groups $S(2,n)=G_n(2,1)$ \cite{MR830041}, and the Gilbert-Howie groups $H(n,m)=G_n(m,1)$ \cite{MR1332862}, as well as many generalisations. These groups have been studied from various perspectives, including finiteness and asphericity \cite{MR2003626,MR2464807,MR1622427,MR1298588,MR1332862,MR2488144}, hyperbolicity \cite{MR4223853,MR4243359,MR4424974}, perfectness \cite{MR1700021,MR2661287} (see also \cite{MR4315549}), and topology \cite{MR1332862}.

\medskip
A basic invariant of a finitely presented group is its abelianization.
For cyclically presented groups, this invariant is especially accessible: abelianization gives an integer circulant relation matrix, so its structure is determined by the Smith normal form of this circulant matrix.
Equivalently, the \(\mathbb{Z}\)-rank of the abelianization, that is, the minimal number of generators of its torsion-free part, is determined by the greatest common divisor of the \emph{representer} polynomial (see Section~\ref{prelim}) and $t^n-1$. For the Fibonacci-type presentation \eqref{eq:Gnmk}, the representer polynomial is
\[
1+t^m-t^k.
\]
The \(\mathbb{Z}\)-rank of $G_n(m,k)^{\mathrm{ab}}$ is then equal to
the degree of $\gcd(1+t^m-t^k,t^n-1)$, that is, the number of common roots of
$1+t^m-t^k$ and $t^n-1$, counted with multiplicity.

\medskip

For a generic choice of parameters $n$ and $m$, the abelianization of $H(n,m)$ is finite or of $\mathbb{Z}$-rank 1. In particular, the cases with $\mathbb{Z}$-rank 2 are exceptional. When the abelianization has $\mathbb{Z}$-rank 2, it is necessary that $m \equiv 2 \pmod n$. This places these groups in close relation with the Sieradski groups, for which $m=2$. This motivates the problem of determining exactly when the abelianization is free and has $\mathbb{Z}$-rank 2. Guided by extensive computations and structural results, Noferini and Williams proposed the following conjecture which characterizes exactly when this phenomenon occurs.

\begin{conjecture}[Noferini--Williams \cite{MR4418964}]\label{conj}
For $n\ge1$ and $0\le m<n$, $H(n,m)^{\mathrm{ab}}\cong\mathbb{Z}^2$
if and only if 
$n\equiv0\pmod6$ and 
$m\equiv2\pmod n.$
\end{conjecture}

The forward implication follows from cyclotomic roots of order six for the trinomial $1+t^m-t$ under the stated congruences (see \cite[Lemma 4.2]{MR4418964}).
The converse is more difficult as one must rule out all other possible cyclotomic coincidences uniformly in $n$ and $m$. This brings us to the purpose of this paper, which is to prove Conjecture~\ref{conj}. 

\medskip
Our method is arithmetic: the \(\mathbb{Z}\)-rank~2, torsion-free condition reduces to $\operatorname{Res}(F,G)=1,$ where
\[
F=\frac{1+t^m-t}{\Phi_6(t)},\quad
G=\frac{t^n-1}{\Phi_6(t)}.
\]
The proof proceeds by a minimality argument. In Section~\ref{tech}, we show that the conjecture holds for the groups $H(n, 2 \pm n/3)$ and $H(n, 2 + n/2)$. We also derive strong necessary conditions on $n$ and $m$ that any minimal counterexample must satisfy. In Section \ref{sec:mainproof}, we show that any such minimal counterexample must in fact belong to one of these families. Consequently, no minimal counterexample exists, and the conjecture follows.

\section{Preliminaries and notations}\label{prelim}

This section lays the foundation for the remainder of the paper by presenting the core ideas and setting the notation that will be used consistently throughout. All notations introduced here will remain in force for the remainder of the article and may not be recalled.

\medskip
For an integer $n\geq 1$, a \emph{cyclic presentation} is a group presentation of the form
\[
P_n(w)=\langle x_0,\dots,x_{n-1}\mid w(x_i,x_{i+1},\dots,x_{i+n-1})\ (0\le i<n)\rangle,
\]
subscripts are taken modulo $n$, and
\[
w=w(x_0,x_1,\dots,x_{n-1})
\]
is a word in the free group on the generators $x_0,\dots,x_{n-1}$. The group defined by the presentation $P_n(w)$ is called the cyclically presented group and is denoted by $G_n(w)$.

\medskip
Let $c_i$ denote the exponent sum of the generator $(x_i)$ in the word $(w)$, for $0 \leq i < n$. The relation matrix of $G_n(w)^{\text{ab}}$ is then the $n \times n$ \emph{circulant matrix} $ C = \operatorname{circ}_n(c_0, c_1, \ldots, c_{n-1})$, where each row is a cyclic shift of the previous one. The group $G_n(w)$ is perfect if and only if $C$ is unimodular. In contrast, $G_n(w)^{\text{ab}}$ is infinite if and only if $C$ is singular. Associated with $C$ is the \emph{representer polynomial}, whose structure reflects the combinatorics of the presentation,
\[
f_C(t)=\sum_{i=0}^{n-1} c_i t^i,
\]
whose structure reflects the combinatorics of the presentation. As shown in \cite[Equation~3.2.14]{MR543191} and \cite[Theorem~3, p.~78]{MR695161}, the determinant of $C$ satisfies
\[
|\det C|
=
|\operatorname{Res}(f_C,g)|
=
\prod_{i=0}^{n-1} |f_C(\zeta^i)|,
\]
where $g(t)=t^n-1$, $\zeta$ is a primitive $n$th root of unity, and
$\operatorname{Res}(f_C,g)$ denotes the resultant of $f_C$ and $g$. Equivalently, grouping the roots of unity according to their orders, we obtain
\[
|\operatorname{Res}(f_C,g)|
=
\prod_{d\mid n} \bigl|\operatorname{Res}(f_C,\Phi_d)\bigr|
=
\prod_{d\mid n}
\Bigl|
\operatorname{N}_{\mathbb{Q}(\zeta_d)/\mathbb{Q}}
\bigl(f(\zeta_d)\bigr)
\Bigr|,
\]
where $\Phi_d$ denotes the $d$th cyclotomic polynomial and
$\operatorname{N}_{\mathbb{Q}(\zeta_d)/\mathbb{Q}}$ is the field norm. Since we will only be concerned with absolute values, throughout this article,
we adopt the convention that $\operatorname{Res}(A,B)$ denotes
$|\operatorname{Res}(A,B)|$.

\medskip
We recall some basic properties of the resultant.
For $A,B,C\in\mathbb{Z}[t]$, we have
\begin{equation}\label{eq:res-shift}
\operatorname{Res}(A+BC,B)=\operatorname{Res}(A,B),
\end{equation}
and
\begin{equation}\label{eq:res-mult}
\operatorname{Res}(A,BC)
=
\operatorname{Res}(A,B)\operatorname{Res}(A,C).
\end{equation}

Moreover, for $p,q\in\mathbb{Z}$ with $p\neq0$ and $P\in\mathbb{Z}[t]$, we have
\begin{equation}\label{eq:res-linear}
|\operatorname{Res}(pt+q,P)|
=
|p|^{\deg P}\,|P(-q/p)|.
\end{equation}

Let $\varphi$ denote Euler’s totient function. For $k\neq l$, the resultant of $\Phi_k$ and
$\Phi_l$ is given by
\begin{equation}\label{eq:res-cyclotomic}
\operatorname{Res}(\Phi_k,\Phi_l)
=
\begin{cases}
p^{\varphi(l)}, & \text{if } k = l p^j \text{ for some prime $p$ and $j\ge 1$},\\[2mm]
p^{\varphi(k)}, & \text{if } l = k p^j \text{ for some prime $p$ and $j\ge 1$},\\[2mm]
1, & \text{otherwise},
\end{cases}
\end{equation}
see \cite{MR251010}.

\medskip
For a prime $p$ and an integer $x$, we write $v_p(x)$ for the $p$-adic valuation of $x$. For $k \ge 0$, we denote by $L_k$ the $k$-th Lucas number defined by
\[
L_k = \phi^k + (-\phi)^{-k},
\]
where $\phi = \frac{1+\sqrt{5}}{2}.$

\subsection{Standing Assumptions}\label{std}
Throughout the remainder of the article, we assume that $2\leq m<n$ and that
$a,b,r,s,u,v\ge1$ are integers such that
\[
m-2=2^b3^r u,\qquad
n=2^a3^s v,\qquad
\gcd(uv,6)=1.
\]

We also define
\[
F(t)=\frac{f(t)}{\Phi_6(t)},
\qquad
G(t)=\frac{g(t)}{\Phi_6(t)},
\]
where
\[
f(t)=t^m-t+1,
\qquad
g(t)=t^n-1.
\]

All lemmas and corollaries in Section~\ref{tech} are stated with terms whose definitions were previously established. Therefore, throughout this section, the meanings of these terms remain unchanged, even as the values of $n$ and $m$ vary with the parameters $a,b,r,s,u,v$; the notation remains consistent.

\section{Technical Results}\label{tech}
In this section, we present the technical results that underpin the proof of Conjecture~\ref{conj}. All notation is as in Section~\ref{std}.

\medskip
Our first result extends \cite[Corollary~5.7]{MR4418964}, which was previously used to verify Conjecture~\ref{conj} in cases where $n=24d$ and $\gcd(6, d)=1$. The result demonstrates that, in certain situations, the size of the torsion part grows at least quadratically in the number of generators.

\begin{lemma}\label{lucas}
Under Assumptions~\ref{std}, assume that $a>b$, and let $K=\gcd(m-2,n)$. Assume further that $m\equiv K+2 \pmod{2K}$. Then $2+L_K \mid \operatorname{Res}(f,G)$. In particular, if $\operatorname{Res}(F,G)=1$, then
\[
2+L_K \mid n^2/3.
\]
\end{lemma}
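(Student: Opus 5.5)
The plan is to locate a factor of $G$ on which $f$ reduces to a fixed quadratic, and then compute that resultant through the roots of the quadratic.

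\textbf{Step 1: a factor of $G$.} Since $a>b$, we have $v_2(K)=b\ge 1$ and $v_2(n)=a>b$. Hence $2K\mid n$, so
\[
t^K+1 \mid t^{2K}-1 \mid t^n-1 .
\]
Next I would check that $\Phi_6\nmid t^K+1$. A primitive sixth root of unity $\omega$ satisfies $\omega^K=-1$ only when $K\equiv 3\pmod 6$. That is impossible here, because $K$ is even.

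The factor $t^n-1$ is squarefree. Moreover $\Phi_6\mid t^n-1$, since $a,s\ge1$ give $6\mid n$. So $t^K+1$ divides $G=(t^n-1)/\Phi_6$. By \eqref{eq:res-mult}, $\operatorname{Res}(f,t^K+1)$ then divides $\operatorname{Res}(f,G)$.

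\textbf{Step 2: reduce $f$ modulo $t^K+1$.} Write $m-2=Kq$. The hypothesis $m\equiv K+2\pmod{2K}$ says exactly that $q$ is odd. Working modulo $t^K+1$,
\[
t^m=t^2(t^K)^q\equiv -t^2 .
\]
Therefore $f\equiv -(t^2+t-1)$. By \eqref{eq:res-shift}, and since we work with absolute values,
\[
\operatorname{Res}(f,t^K+1)=\operatorname{Res}(t^2+t-1,\,t^K+1).
\]

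\textbf{Step 3: evaluate at the roots.} The polynomial $t^2+t-1$ is monic with roots $-\phi$ and $\phi^{-1}$. Its resultant with $t^K+1$ is, up to sign, the product of $t^K+1$ evaluated at these roots. Since $K$ is even, this product is
\[
(\phi^K+1)(\phi^{-K}+1)=2+\phi^K+\phi^{-K}=2+L_K .
\]
This gives $2+L_K\mid\operatorname{Res}(f,G)$.

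\textbf{Step 4: the consequence.} Since $f=F\Phi_6$, multiplicativity gives
\[
\operatorname{Res}(f,G)=\operatorname{Res}(F,G)\operatorname{Res}(\Phi_6,G).
\]
So if $\operatorname{Res}(F,G)=1$, then $2+L_K$ divides $\operatorname{Res}(\Phi_6,G)=\prod_{d\mid n,\,d\ne 6}\operatorname{Res}(\Phi_6,\Phi_d)$. I would evaluate this product using \eqref{eq:res-cyclotomic}. The only nontrivial factors are:
\begin{itemize}
\item $d=2$, giving $3^{\varphi(2)}=3$;
\item $d=3$, giving $2^{\varphi(3)}=4$;
\item $d=6\cdot 2^j$ for $1\le j\le a-1$, each giving $2^{\varphi(6)}=4$;
\item $d=6\cdot 3^j$ for $1\le j\le s-1$, each giving $3^{\varphi(6)}=9$.
\end{itemize}
All other divisors, including $d=1$, contribute $1$. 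The total is
\[
3\cdot 4^a\cdot 9^{s-1}=\frac{4^a 9^s}{3},
\]
which divides $n^2/3$.

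The main point needing care is the bookkeeping in Step 4. One must use that $\Phi_6$ appears only once in $t^n-1$, so $d=6$ itself is excluded. One must also correctly identify which pairs $(6,d)$ differ by a prime power.
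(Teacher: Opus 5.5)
Your Steps 1--3 are correct and match the paper's argument: you factor $t^K+1$ out of $G$, reduce $f$ to $-(t^2+t-1)$ modulo $t^K+1$, and evaluate at $-\phi$ and $\phi^{-1}$. The gap is in Step 4, where your list of divisors $d\mid n$ with $\operatorname{Res}(\Phi_6,\Phi_d)\neq 1$ is incomplete. By \eqref{eq:res-cyclotomic}, every $d=6p^j$ with $p\ge 5$ prime and $p^j\mid v$ also contributes $p^{\varphi(6)}=p^2$; for example, $\operatorname{Res}(\Phi_6,\Phi_{30})=25$. Your claim that these divisors contribute $1$ is therefore false, and so is the value $\operatorname{Res}(\Phi_6,G)=4^a9^s/3$ whenever $v>1$. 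Concretely, $(n,m)=(60,8)$ satisfies all the hypotheses ($a=2>b=1$, $K=6$, $m\equiv 8 \pmod{12}$), yet $\operatorname{Res}(\Phi_6,G)=1200$, not $48$. As written, your chain would also give the unjustified stronger conclusion $2+L_K\mid 4^a9^s/3$.

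The repair is immediate. The omitted divisors contribute $\prod_{p\mid v}p^{2v_p(v)}=v^2$, so $\operatorname{Res}(\Phi_6,G)=3\cdot 4^a\cdot 9^{s-1}\cdot v^2=n^2/3$ exactly, and $2+L_K\mid n^2/3$ follows. The paper gets this value without any divisor bookkeeping. The primitive sixth roots $\zeta,\overline{\zeta}$ are simple roots of $t^n-1$, so $G(\zeta)=n\zeta^{n-1}/(2\zeta-1)$. Since $|2\zeta-1|^2=3$, this gives $\operatorname{Res}(\Phi_6,G)=|G(\zeta)|^2=n^2/3$. With this correction your proof is complete and is essentially the paper's.
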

\begin{proof}
The integer $K$ is divisible by $6$, so $\Phi_6(t)$ divides $t^K-1$. Since $a>b$, $2K$ divides $n$. Hence, the standard factorization (see \cite[\S3.3]{MR1421575}) shows that $t^K+1$ also divides $t^n-1$. We claim that $\gcd(t^K+1,\Phi_6(t))=1$. Indeed, if $\Phi_6(\zeta)=0$, then $\zeta$ is a primitive sixth root of unity, so $\zeta^3=-1$. If $\zeta$ is also a root of $t^K+1$, then $\zeta^K=-1$, so $\zeta^{K-3}=1$, which implies that $K\equiv 3 \pmod{6}$, contradicting $K\equiv 0 \pmod{6}$.

\medskip
It follows by Euclid’s lemma that $t^K+1$ divides $G(t)$, so $G(t)=(t^K+1)Q(t)$ for some $Q(t)\in\mathbb{Z}[t]$. The assumption $m\equiv K+2 \pmod{2K}$ implies that
\[
t^m \equiv t^{K+2}\equiv -t^2 \pmod{t^K+1}.
\]
Therefore
\[
f(t)=t^m-t+1\equiv -t^2-t+1 \pmod{t^K+1}.
\]

Hence, by \eqref{eq:res-mult}, we have
\[
\operatorname{Res}(f,t^K+1)
=\operatorname{Res}(-t^2-t+1,t^K+1)
=\operatorname{Res}(t^2+t-1,t^K+1).
\]
Let $\alpha,\beta$ be the roots of $t^2+t-1$, so $\alpha,\beta\in\{\phi^{-1},-\phi\}$, where $\phi=(1+\sqrt{5})/2$. Hence
\[
\operatorname{Res}(f,t^K+1)
=(\alpha^{K}+1)(\beta^{K}+1)
=2+L_K.
\]

By \eqref{eq:res-mult} again and using $G(t)=(t^K+1)Q(t)$, we have
\[
\operatorname{Res}(f,G)
=\operatorname{Res}(f,(t^K+1)Q)
=\operatorname{Res}(f,t^K+1)\operatorname{Res}(f,Q)
=(2+L_K)\operatorname{Res}(f,Q),
\]
and hence $2+L_K$ divides $\operatorname{Res}(f,G)$. Since $f=\Phi_6 F$, we conclude using \eqref{eq:res-mult} yet again that
\begin{equation}\label{eq:res-factor}
\operatorname{Res}(f,G)
=
\operatorname{Res}(\Phi_6 F,G)
=
\operatorname{Res}(\Phi_6,G)\operatorname{Res}(F,G).
\end{equation}

If $\operatorname{Res}(F,G)=1$, then
\begin{equation}\label{eq:res-factor2}
\operatorname{Res}(f,G)=\operatorname{Res}(\Phi_6,G).
\end{equation}

Let $\zeta$ be a root of $\Phi_6$. Since $6$ divides $n$, we have $\zeta^n=1$. As $\zeta$ and $\overline{\zeta}$ are simple roots of $t^n-1$, we can apply L’Hôpital’s rule at $t=\zeta$. This gives
\[
G(\zeta)=\frac{n\zeta^{\,n-1}}{2\zeta-1}.
\]
Hence,
\[
\operatorname{Res}(\Phi_6, G)
=
G(\zeta)\,G(\overline{\zeta})
=
\frac{n^2}{3},
\]
and combining this with \eqref{eq:res-factor} and \eqref{eq:res-factor2}, we conclude that $2+L_K$ divides $n^2/3$.
\end{proof}

\begin{corollary}\label{bge22-special}
Under Assumptions~\ref{std}, suppose that $n \ge 22$ and $a>b$. If $m = 2 + n/2$, then $\operatorname{Res}(F,G) \neq 1$.
\end{corollary}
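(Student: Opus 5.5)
The plan is to apply Lemma~\ref{lucas} with $m=2+n/2$ and then use the growth of Lucas numbers to contradict the divisibility it produces.

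\emph{Checking the hypotheses.} With $m-2=n/2$ we have $K=\gcd(m-2,n)=\gcd(n/2,n)=n/2$. Then $m=K+2$, so $m\equiv K+2 \pmod{2K}$ holds trivially. Under Assumptions~\ref{std} we need $6\mid K$. This holds: $m-2=2^b3^ru$ with $b,r\ge1$, and $m-2=n/2$ gives $a=b+1>b$, consistent with the hypothesis. So Lemma~\ref{lucas} applies. If $\operatorname{Res}(F,G)=1$, it yields
\[
2+L_{n/2}\ \Big|\ \frac{n^2}{3},
\]
and in particular $2+L_{n/2}\le n^2/3$.

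\emph{Contradiction from growth.} Since $L_k=\phi^k+(-\phi)^{-k}$ and $K=n/2$ is even, $L_{n/2}>\phi^{n/2}$. So it suffices to show
\[
\phi^{n/2}+2>\frac{n^2}{3}\qquad\text{for all } n\ge 22 .
\]
The steps are:
\begin{enumerate}
\item Check the base case $n=22$ directly: $L_{11}=199$, so $2+L_{11}=201$, while $n^2/3\approx161$.
\item Propagate by induction. Increasing $n$ by $2$ multiplies the left side by roughly $\phi\approx1.618$. It multiplies $n^2$ by $((n+2)/n)^2\le(24/22)^2<1.2$.
\end{enumerate}
In fact only $n$ divisible by $6$ can occur, since $6\mid K$ forces $12\mid n$. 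So the relevant values are $n=24,36,\dots$, and even the first is easy: $L_{12}=322$ versus $24^2/3=192$. For these $n$ the inequality contradicts divisibility, because a positive integer cannot divide a smaller positive integer. Hence $\operatorname{Res}(F,G)\ne1$.

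The main (mild) obstacle is bookkeeping rather than depth. One must confirm that Lemma~\ref{lucas}'s hypotheses, especially $a>b$ and $6\mid K$, are met within the standing assumptions. One must also make the elementary inequality $2+L_{n/2}>n^2/3$ rigorous for all $n\ge22$, which the induction above handles. The threshold $22$ is presumably chosen as the point beyond which this inequality is guaranteed.
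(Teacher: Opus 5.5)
Your proposal is correct and matches the paper's proof: you take $K=n/2$ in Lemma~\ref{lucas}, whose hypotheses hold trivially since $m=K+2$ and $a=b+1$, and then contradict $2+L_{n/2}\mid n^2/3$ using $2+L_{n/2}>n^2/3$ for $n\ge 22$, which the paper also attributes to an easy induction. One cosmetic point: your parity remark ($K$ even, so $L_K>\phi^K$) does not cover the base case $n=22$, where $K=11$, but this is harmless because you check $L_{11}$ directly and, in any case, $12\mid n$ forces $n\ge 24$.
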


\begin{proof}
Let $K = n/2$. Then all the hypotheses of Lemma~\ref{lucas} are satisfied with $n = 2K$ and $m = K+2$. Assume, for contradiction, that $\operatorname{Res}(F,G)=1$. By Lemma~\ref{lucas}, it follows that $2+L_K$ divides $n^2/3$. Since $K = n/2$, we have
\[
2+L_{n/2} \mid \frac{n^2}{3}.
\]
However, an easy induction argument shows that $2+L_{n/2} > n^2/3$ for $n \ge 22$, which is a contradiction. Therefore, $\operatorname{Res}(F,G)\neq 1$.
\end{proof}

\medskip
Our next result shows that if $\operatorname{Res}(F,G)=1$, then $v$ divides $u$.

\begin{lemma}\label{vg1}
Under Assumptions~\ref{std}, if $v\nmid u$, then $\operatorname{Res}(F,G)\neq 1$.
\end{lemma}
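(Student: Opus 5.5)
Suppose $v\nmid u$. Then there is a prime $p\ge 5$ and an exponent $e\ge1$ with $p^e\mid v$ and $p^e\nmid u$. Set $e=v_p(n)$, so that $v_p(m-2)<e$. The plan is to exhibit a divisor $d$ of $n$ with $d\ne 6$ for which $\operatorname{Res}(f,\Phi_d)$ is divisible by $p$. We would then use the factorization \eqref{eq:res-factor}, $\operatorname{Res}(f,G)=\operatorname{Res}(\Phi_6,G)\operatorname{Res}(F,G)$, together with the computation $\operatorname{Res}(\Phi_6,G)=n^2/3$ from the proof of Lemma~\ref{lucas}. Note that $6\mid n$ because $a,s\ge1$. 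It suffices to show that $v_p(\operatorname{Res}(f,G))>v_p(n^2/3)=2e$.

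The natural candidates for $d$ are $6p^e$, or more generally $d=6p^j$ or $d=p^j$ with $v_p(m-2)<j\le e$. Work modulo a prime $\mathfrak p$ above $p$ in $\mathbb{Z}[\zeta_d]$. Since $p$ is totally ramified in $\mathbb{Q}(\zeta_{p^j})$, we have $\zeta_{p^j}\equiv 1 \pmod{\mathfrak p}$. Hence, for $d=6p^j$, writing $\zeta_d=\omega\eta$ with $\omega$ a primitive sixth root and $\eta$ a primitive $p^j$-th root, we get
\[
f(\zeta_d)=\omega^m\eta^m-\omega\eta+1\equiv \omega^m-\omega+1\pmod{\mathfrak p}.
\]
This vanishes exactly when $m\equiv 2\pmod 6$, which holds because $6\mid m-2$. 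So every conjugate $f(\zeta_d^{k})$ is divisible by $\mathfrak p$. Taking norms from $\mathbb{Q}(\zeta_d)$, we would conclude $p^{\varphi(6p^j)/[\text{local degree}]}\mid \operatorname{Res}(f,\Phi_d)$. More crudely, $\operatorname{Res}(f,\Phi_d)$ is divisible by $p^{f_0}$, where $f_0$ counts the primes above $p$, and in any case by $p$.

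To beat the exponent $2e$, we would sum the contributions over all $d=6p^j$ with $1\le j\le e$; each is nontrivial. The norm of an element of $\mathfrak p$ has valuation at least the residue degree times the number of primes above $p$, which is $\varphi(6p^j)/\varphi(p^j)=2$. That gives only $2$ per divisor, so $v_p\ge 2e$, and we need strictly more. The extra room comes from the hypothesis $v_p(m-2)<e$. For $j=e$ we have $\eta^{m-2}\ne1$. Writing $f(\omega\eta)=\omega^2\eta^2\bigl(\eta^{m-2}-1\bigr)+\bigl(\omega^2\eta^2-\omega\eta+1\bigr)$ and using $\omega^2-\omega+1=0$, we would analyse $f(\omega\eta)$ in terms of the uniformizer $\pi=1-\eta$. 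The aim is to show either that $v_\pi(f(\omega\eta))\ge 2$ for some $j$, or that additional divisors $d=2p^j$ or $3p^j$ also contribute. Alternatively, one could use divisors $d=p^j$ with $f(1)=1$ and get no contribution, so the real source must be a refined valuation count.

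The main obstacle is this valuation bookkeeping: showing that the total $p$-adic valuation strictly exceeds $2e$. What I would try first is the following. Expand $\omega\eta=\omega(1-\pi)$, and note that $\omega^2\eta^2-\omega\eta+1=\omega^2(\eta^2-1)-\omega(\eta-1)$. This has $\pi$-valuation exactly $1$, with leading coefficient $\omega-2\omega^2$, a unit times $\sqrt{-3}$, which is a unit at $p\ge5$. The term $\eta^{m-2}-1$ has $\pi$-valuation $p^{v_p(m-2)}\ge1$, with valuation exactly $1$ exactly when $p\nmid m-2$. So cancellation of leading terms mod $\pi^2$ gives the extra valuation. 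If no cancellation occurs, the contributions are exactly $2$ per level $j$. In that case I would instead look at levels $j\le v_p(m-2)$, where $\eta^{m-2}=1$ and the contribution is clearly $2$, against the levels $j>v_p(m-2)$, and try to exploit $2\nmid$-type parity or a divisor $d=6p^j$ beyond $6$ in the $2$- or $3$-part (using $a,s$) to obtain one surplus factor of $p$.
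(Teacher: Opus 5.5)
Your strategy cannot work: the inequality you are aiming for is false in general, so no refinement of the valuation bookkeeping will close it. That inequality is $v_p(\operatorname{Res}(f,G))>2v_p(n)$, for a prime $p\ge5$ dividing $v$ to a higher power than $u$. The torsion of $H(n,m)^{\mathrm{ab}}$ need not involve the primes dividing $v$ at all.

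Take $n=30$, $m=8$. Then $a=b=r=s=u=1$ and $v=5$, so $p=5$ is your only choice. Your own expansion gives the following: when $p\nmid m-2$, the coefficient of $\eta-1$ in $f(\omega\eta)$ is $\omega(m\omega-1)$, whose norm is $m^2-m+1$. Here $m^2-m+1=57$ is prime to $5$. So $f(\zeta_{30})$ has valuation exactly $1$ at the unique prime above $5$ in $\mathbb{Q}(\zeta_{30})$, which has residue degree $2$. Hence $v_5(\operatorname{Res}(f,\Phi_{30}))=2$.

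The remaining divisors contribute nothing at $5$. Modulo a prime above $5$ we have $f(\zeta_5)\equiv1$, $f(\zeta_{10})\equiv3$ and $f(\zeta_3\eta)\equiv\zeta_3^2-\zeta_3+1=-2\zeta_3$. The divisors $d=1,2,3$ give $1,3,4$. Hence $v_5(\operatorname{Res}(f,G))=2=v_5(n^2/3)$, i.e. $5\nmid\operatorname{Res}(F,G)$. This rules out both of your fallback hopes (cancellation at some level, or extra contributions from $d=2p^j,3p^j$).

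What actually detects the torsion in this example is $\operatorname{Res}(f,\Phi_5)=\operatorname{Res}(t^3-t+1,\Phi_5)=11$. This is the idea you are missing, and it is the route the paper takes: work with the factors $t^v\mp1$ of $G$.

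\begin{enumerate}
\item Since $\gcd(v,6)=1$ and $n/v$ is even, both $t^v-1$ and $t^v+1$ divide $t^n-1$ and are coprime to $\Phi_6$, so they divide $G$. Moreover $\operatorname{Res}(\Phi_6,t^v-1)=1$ and $\operatorname{Res}(\Phi_6,t^v+1)=3$.
\item Thus $\operatorname{Res}(F,G)=1$ forces $\operatorname{Res}(f,t^v-1)=1$, i.e. $H(v,m \bmod v)$ is perfect.
\item The criterion \cite[Lemma~4.3]{MR4418964} then gives $v\mid m-1$, $v\mid m-2$ or $v=1$. The last two are excluded, because $v\nmid u$ means $v>1$ and $v\nmid m-2$. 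So $v\mid m-1$.
\item Write $m=1+\alpha v$ with $\alpha$ odd. Then $f\equiv 1-2t \pmod{t^v+1}$, so $\operatorname{Res}(f,t^v+1)=2^v+1$.
\item Hence $\operatorname{Res}(F,t^v+1)=(2^v+1)/3>1$.
\end{enumerate}

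The obstruction lives at primes dividing $\operatorname{Res}(f,t^v-1)$ or $(2^v+1)/3$, not at primes dividing $v$. That is why a local analysis at $p\mid v$ cannot succeed.
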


\begin{proof}
Suppose for contradiction that $\operatorname{Res}(F,G)=1$ and that $v\nmid u$. Then $v>1$. By \cite[Lemma~4.3]{MR4418964}, one of the following holds:
\[
v\mid m-1,\quad v\mid m-2,\quad v\mid 1.
\]
Since $\gcd(v,m-2)=1$ and $v>1$, the only possibility is $v\mid m-1$. Write $m=1+\alpha v$ for some $\alpha\in\mathbb{N}$. Since $6\mid m-2$, it follows that $6\mid\alpha v- 1$. Set $\alpha_0 = n/v$. Since $\alpha_0$ is even, an argument similar to that used in the proof of Lemma~\ref{lucas} shows that $t^v + 1$ divides $t^n - 1$. Since $\gcd(6, v) = 1$, we have
\[
\gcd(t^v+1,\Phi_6(t))=1.
\]
Hence $t^v+1$ divides $G(t)$, and so for some $S(t)\in\mathbb{Z}[t]$, we may write
\[
G(t)=(t^v+1)S(t).
\]
By \eqref{eq:res-mult}, we have
\[
\operatorname{Res}(F,G)
=\operatorname{Res}(F,t^v+1)\operatorname{Res}(F,S).
\]
Since $\operatorname{Res}(F,G)=1$, it follows that
\begin{equation}\label{eq:resultant-F}
\operatorname{Res}(F,t^v+1)=1.
\end{equation}

As $\alpha$ is odd, we get
\[
t^m=t^{1+\alpha v}=t(t^v)^\alpha\equiv -t \pmod{t^v+1}.
\]
Hence,
\[
f(t)=t^m-t+1\equiv 1-2t \pmod{t^v+1}.
\]
Applying \eqref{eq:res-shift}, we get
\begin{equation}\label{eq:resultant-linear}
\operatorname{Res}(f,t^v+1)
=\operatorname{Res}(1-2t,t^v+1)
=\left|(-2)^{v}\left(2^{-v}+1\right)\right|
=2^{v}+1.
\end{equation}
Since $f = \Phi_6 F$ and $\operatorname{Res}(\Phi_6(t), t^v + 1) = 3$ (as $\gcd(v,6)=1$), it follows that
\begin{equation}\label{eq:resultant-factorization}
\operatorname{Res}(f,t^v+1)
=\operatorname{Res}(\Phi_6,t^v+1)\operatorname{Res}(F,t^v+1)
=3\,\operatorname{Res}(F,t^v+1).
\end{equation}

Therefore, combining \eqref{eq:resultant-linear} and \eqref{eq:resultant-factorization} gives
\[
\operatorname{Res}(F,t^v+1)=\frac{2^{v}+1}{3}.
\]
However, \eqref{eq:resultant-F} forces $v=1$, which contradicts the earlier assumption $v>1$. Therefore, we conclude that if $\operatorname{Res}(F,G)=1$, then $v\mid u$.
\end{proof}

\begin{remark}\label{rems}
Lemma \ref{vg1} implies that under the assumptions~\ref{std}, whenever $\operatorname{Res}(F,G)=1$, we may write $u=\tau v$ for some integer $\tau\ge 1$.
\end{remark}

\begin{lemma}\label{lem:v1_reduction}
Under Assumptions~\ref{std}, assume that $a=v=u=1$, that $b\in\{1,2\}$, and that $r=s-1$ so $(n=2\cdot 3^s$ and $m-2=2^b3^{s-1})$. If $m-2\in \{n/3, 2n/3\}$ and $\operatorname{Res}(F,G)=1$, then $\operatorname{Res}(f,\Phi_n)=9$.
\end{lemma}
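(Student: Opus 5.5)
The plan is to compute $\operatorname{Res}(f,G)$ in two ways and compare. The hypothesis $\operatorname{Res}(F,G)=1$ will give its total value, and the cyclotomic factorization of $G$ will give it as a product in which every factor except two is explicitly computable.

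First, exactly as in the proof of Lemma~\ref{lucas}, $\operatorname{Res}(F,G)=1$ together with \eqref{eq:res-factor} and \eqref{eq:res-factor2} gives
\[
\operatorname{Res}(f,G)=\operatorname{Res}(\Phi_6,G)=\frac{n^2}{3}=4\cdot 3^{2s-1}.
\]
Next, $G=\prod_{d\mid n,\,d\neq 6}\Phi_d$, and the divisors of $n=2\cdot 3^s$ are $3^j$ and $2\cdot 3^j$ for $0\le j\le s$. By \eqref{eq:res-mult},
\[
\operatorname{Res}(f,G)=\prod_{d\mid n,\ d\ne 6}\operatorname{Res}(f,\Phi_d).
\]

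Now evaluate each factor. Since $m-2=2^b3^{s-1}$, every divisor $d\ne 6$ of $n$ except $3^s$ and $n$ itself divides $m-2$. For such $d$ and $\zeta$ a primitive $d$th root of unity, $\zeta^m=\zeta^2$, so $f(\zeta)=\Phi_6(\zeta)$. Hence $\operatorname{Res}(f,\Phi_d)=\operatorname{Res}(\Phi_6,\Phi_d)$, which \eqref{eq:res-cyclotomic} evaluates as follows:
\begin{itemize}
\item $d=1$ gives $1$;
\item $d=2$ gives $3^{\varphi(2)}=3$;
\item $d=3$ gives $2^{\varphi(3)}=4$;
\item $d=3^j$ with $2\le j\le s-1$ gives $1$;
\item $d=2\cdot 3^j$ with $2\le j\le s-1$ gives $3^{\varphi(6)}=9$.
\end{itemize}
These known factors multiply to $12\cdot 9^{s-2}=4\cdot 3^{2s-3}$ (assuming $s\ge 2$). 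Dividing, we obtain
\[
\operatorname{Res}(f,\Phi_{3^s})\cdot\operatorname{Res}(f,\Phi_n)=9.
\]
So both factors are nonzero and divide $9$.

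The key remaining step, which I expect to be the only non-bookkeeping point, is to show $\operatorname{Res}(f,\Phi_{3^s})=1$. I would argue modulo $3$. Since $\Phi_{3^s}\equiv(t-1)^{\varphi(3^s)}\pmod 3$, the resultant satisfies
\[
\operatorname{Res}(f,\Phi_{3^s})\equiv \pm f(1)^{\varphi(3^s)}=\pm1\pmod 3,
\]
for example by viewing the resultant as a Sylvester determinant and reducing mod $3$. A positive divisor of $9$ that is prime to $3$ must equal $1$, so $\operatorname{Res}(f,\Phi_n)=9$. The condition $m-2\in\{n/3,2n/3\}$ enters in confirming the divisor bookkeeping, namely that $3^s\nmid m-2$, which leaves $3^s$ and $n$ as the only divisors whose factors are not forced.

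Finally, the degenerate case $s=1$ needs separate treatment. There $n=6$, and $m<n$ forces $m=4$. The divisors other than $6$ are $1,2,3$, contributing $1\cdot3\cdot4=12=n^2/3$, which is consistent with the computation above. But now $n=6$ itself is excluded from $G$, so the statement would read $\operatorname{Res}(f,\Phi_6)=0$. I would therefore check the statement's intended range and either assume $s\ge2$ or dispose of $n=6$ directly.
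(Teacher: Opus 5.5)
Your proof is correct, but it uses different bookkeeping from the paper's. The paper works with $F$ rather than $f$. For each $d\mid n/3$ with $d\neq 6$, the identity $f(\zeta)=\Phi_6(\zeta)$ gives $\operatorname{Res}(F,\Phi_d)=1$ directly, because the factor $\operatorname{Res}(\Phi_6,\Phi_d)$ cancels and never has to be evaluated. Hence $1=\operatorname{Res}(F,G)=\operatorname{Res}(F,\Phi_{3^s})\operatorname{Res}(F,\Phi_n)$. Both factors are nonnegative integers since $F\in\mathbb{Z}[t]$, so each equals $1$, and therefore $\operatorname{Res}(f,\Phi_n)=\operatorname{Res}(\Phi_6,\Phi_n)=9$.

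You instead work with $f$ throughout. You import $\operatorname{Res}(f,G)=n^2/3$ from the proof of Lemma~\ref{lucas} and evaluate every $\operatorname{Res}(\Phi_6,\Phi_d)$ explicitly via \eqref{eq:res-cyclotomic}. This only gives the product $\operatorname{Res}(f,\Phi_{3^s})\operatorname{Res}(f,\Phi_n)=9$, which is why you need the reduction modulo $3$ to exclude the split $3\cdot 3$. That step is valid: both polynomials are monic, so the Sylvester determinant reduces correctly, and $\Phi_{3^s}\equiv(t-1)^{\varphi(3^s)}\pmod 3$ with $f(1)=1$. Your route buys the unconditional fact $\operatorname{Res}(f,\Phi_{3^s})\equiv\pm1\pmod 3$ and a consistency check against $n^2/3$.

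The paper's route is shorter, because the hypothesis $\operatorname{Res}(F,G)=1$ already forces $\operatorname{Res}(f,\Phi_{3^s})=\operatorname{Res}(\Phi_6,\Phi_{3^s})=1$ by integrality alone. Even within your setup you could skip the mod-$3$ step: $\operatorname{Res}(f,\Phi_n)=9\operatorname{Res}(F,\Phi_n)$ shows $9\mid\operatorname{Res}(f,\Phi_n)$, which settles the split.

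The case $s=1$ you flag does not arise. The standing assumptions require $r\ge1$, so $r=s-1$ forces $s\ge 2$.
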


\begin{proof}
Since $n=2\cdot 3^s$, we set $K=n/3=2\cdot 3^{s-1}$ so that $(m-2)\in\{K,2K\}$.
Let $d$ be any divisor of $K$ with $d\neq 6$, and let $\zeta$ be a primitive $d$-th root of unity.
Then $\zeta^{K}=1$, hence $\zeta^{m-2}=1$, and therefore
\[
f(\zeta)=\zeta^{m}-\zeta+1=\zeta^2-\zeta+1=\Phi_6(\zeta).
\]
Taking the product over all roots of $\Phi_d$ gives
\begin{equation}\label{eq:res_f_Phi_d}
\operatorname{Res}(f,\Phi_d)=\operatorname{Res}(\Phi_6,\Phi_d).
\end{equation}
Since $f=\Phi_6F$, multiplicativity of the resultant \eqref{eq:res-mult} gives
\[
\operatorname{Res}(f,\Phi_d)
=
\operatorname{Res}(\Phi_6,\Phi_d)\operatorname{Res}(F,\Phi_d).
\]
Combining this with \eqref{eq:res_f_Phi_d}, we obtain $\operatorname{Res}(F,\Phi_d)=1$ for all $d\mid K$, with $d\neq 6$. Since $G(t)=(t^n-1)/\Phi_6(t)$ and
\[
t^n-1=\prod_{d\mid n}\Phi_d(t),
\]
we have
\[
G(t)=\prod_{\substack{d\mid n\\ d\neq 6}}\Phi_d(t).
\]
As $n=2\cdot 3^s$, the divisors of $n$ are exactly the divisors of $K$ together with $3^s$ and $n$.
Hence
\[
G(t)=
\left(\prod_{\substack{d\mid K\\ d\neq 6}}\Phi_d(t)\right)\Phi_{3^s}(t)\Phi_n(t),
\]
and therefore, by \eqref{eq:res-mult},
\begin{equation}\label{eq:res_FG_split}
\operatorname{Res}(F,G)
=
\operatorname{Res}(F,\Phi_{3^s})\,\operatorname{Res}(F,\Phi_n).
\end{equation}

Using again $f=\Phi_6F$ and \eqref{eq:res-mult}, we have
\[
\operatorname{Res}(f,\Phi_{3^s})
=
\operatorname{Res}(\Phi_6,\Phi_{3^s})\operatorname{Res}(F,\Phi_{3^s}),
\qquad \text{and} \qquad
\operatorname{Res}(f,\Phi_n)
=
\operatorname{Res}(\Phi_6,\Phi_n)\operatorname{Res}(F,\Phi_n).
\]
Substituting these into \eqref{eq:res_FG_split} gives
\[
\operatorname{Res}(F,G)
=
\frac{\operatorname{Res}(f,\Phi_{3^s})}{\operatorname{Res}(\Phi_6,\Phi_{3^s})}\cdot
\frac{\operatorname{Res}(f,\Phi_n)}{\operatorname{Res}(\Phi_6,\Phi_n)}.
\]
By \eqref{eq:res-cyclotomic}, we have $\operatorname{Res}(\Phi_6,\Phi_{3^s})=1$ and
$\operatorname{Res}(\Phi_6,\Phi_n)=9$, so
\[
\operatorname{Res}(F,G)=\operatorname{Res}(f,\Phi_{3^s})\cdot \frac{\operatorname{Res}(f,\Phi_n)}{9}.
\]
If $\operatorname{Res}(F,G)=1$, then the right-hand side must equal $1$, which forces
$\operatorname{Res}(f,\Phi_n)=9$.
\end{proof}

Consequently, in order to prove that $\operatorname{Res}(F,G)\neq 1$ in the setting of Lemma~\ref{lem:v1_reduction}, it
suffices to show that $\operatorname{Res}(f,\Phi_n)\neq 9$.

\begin{lemma}\label{grow}
Under Assumptions~\ref{std}, suppose that $a=v=u=1$, that $b\in\{1,2\}$, and that $r=s-1$. Then
\begin{enumerate}
\item $\operatorname{Res}(f,\Phi_n)=9$ if $s=2$.
\item $\operatorname{Res}(f,\Phi_n)>9$ if $s\ge 3$.
\end{enumerate}
\end{lemma}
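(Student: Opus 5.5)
The plan is to compute $\operatorname{Res}(f,\Phi_n)$ directly as a norm, exploiting the very special shape of $\Phi_n$ when $n=2\cdot 3^s$. Put $M=3^{s-1}$, so $n=6M$ and $\Phi_n(t)=\Phi_6(t^{M})=t^{2M}-t^{M}+1$. The roots of $\Phi_n$ are exactly the $\zeta$ with $\zeta^{M}=\mu$, where $\mu$ runs over the two primitive sixth roots of unity $\mu,\bar\mu$. For such $\zeta$ we have $\zeta^{n/3}=\zeta^{2M}=\mu^2=:\omega$, a primitive cube root of unity. Since $m-2\in\{n/3,2n/3\}$, this gives $\zeta^{m-2}=\omega^{b}$ with $b\in\{1,2\}$. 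Hence
\[
f(\zeta)=h_c(\zeta), \qquad h_c(t)=c\,t^2-t+1, \qquad c=\mu^{2b}.
\]
Over $\mathbb{Q}(\omega)$ we factor $\Phi_n=(t^M-\mu)(t^M-\bar\mu)$, and grouping the roots accordingly we obtain
\[
\operatorname{Res}(f,\Phi_n)=\bigl|\operatorname{Res}(h_c,t^M-\mu)\bigr|\cdot\bigl|\operatorname{Res}(h_{\bar c},t^M-\bar\mu)\bigr| = \bigl|\operatorname{Res}(h_c,t^M-\mu)\bigr|^2,
\]
since the two factors are complex conjugates.

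Next we evaluate the inner resultant through the roots $\alpha,\beta$ of $h_c$. These satisfy $\alpha\beta=\alpha+\beta=c^{-1}$, and
\[
\bigl|\operatorname{Res}(h_c,t^M-\mu)\bigr| = |c|^M\,|\alpha^M-\mu|\,|\beta^M-\mu| = |\alpha^M-\mu|\,|\beta^M-\mu|.
\]
For part (1), $s=2$ gives $M=3$. Here one expands $(\alpha^3-\mu)(\beta^3-\mu)=(\alpha\beta)^3-\mu(\alpha^3+\beta^3)+\mu^2$. The power sum $\alpha^3+\beta^3$ follows from Newton's identities in terms of $c^{-1}$, and the result is an explicit element of $\mathbb{Z}[\omega]$. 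One checks it has absolute value $3$, in both cases $b=1,2$, so the resultant equals $9$. This step is a routine, finite check.

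For part (2) I would use a size estimate. We have $|\alpha||\beta|=|c|^{-1}=1$. The roots are $t=\bigl(1\pm\sqrt{1-4c}\bigr)/(2c)$, and $|1-4c|=|3\mp 2\sqrt3\, i|=\sqrt{21}$. Computing the square root shows the larger root satisfies $|\alpha|\approx 1.54$, so $|\beta|=|\alpha|^{-1}<1$; the value of $|\alpha|$ is the same for $c$ and $\bar c$ by conjugation symmetry. The triangle inequality then gives
\[
\operatorname{Res}(f,\Phi_n)\ \ge\ \Bigl((|\alpha|^M-1)(1-|\alpha|^{-M})\Bigr)^2 .
\]
The right-hand side is increasing in $M$. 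For $s\ge3$ we have $M\ge 9$, and then $|\alpha|^9>40$, so the bound far exceeds $9$.

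The main obstacle is making this estimate rigorous. We need a clean, verifiable lower bound such as $|\alpha|>3/2$, obtained from an explicit bound on $\operatorname{Re}\sqrt{3-2\sqrt3\,i}$ rather than from a decimal approximation. We must also confirm that the same root modulus occurs in both cases $b=1$ and $b=2$. I would first try squaring: show that $|1+\sqrt{1-4c}|^2>9$ using $\operatorname{Re}\sqrt{1-4c}=\sqrt{(\sqrt{21}+3)/2}$.
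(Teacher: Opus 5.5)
Your proposal is correct and follows essentially the same route as the paper. The paper also computes $\operatorname{Res}(f,\Phi_n)$ as $\bigl|N_{\mathbb{Q}(\zeta_n)/\mathbb{Q}(\zeta_6)}(f(\zeta_n))\bigr|^2$, using the minimal polynomial $X^{3^{s-1}}-\eta$ and the quadratic $\eta^{\varepsilon}X^2-X+1$. It checks $s=2$ by direct evaluation. For $s\ge 3$ it uses $|\beta_1||\beta_2|=1$ together with $|\beta_1|>3^{1/3}$; the only difference is that it bounds $|R_q|\ge|\beta_1|^q-|\beta_1|^{-q}-1$ where you use a factorwise product bound.

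The squared estimate you suggest, $|1+\sqrt{1-4c}|^2=1+\sqrt{21}+2\sqrt{(\sqrt{21}+3)/2}>9$, does settle the rigor point you flag and gives $|\alpha|>3/2$.
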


\begin{proof}
Let $q = 3^{s-1}$ and define $\zeta = \zeta_n$. Set $\eta = \zeta^q$, so that
$\eta$ is a primitive sixth root of unity. Define
$L=\mathbb{Q}(\zeta)$ and $M=\mathbb{Q}(\eta)=\mathbb{Q}(\sqrt{-3})$. Since $n=2\cdot 3^s$, it follows that $\zeta^{n/6}=\eta$, and therefore
$\zeta^{n/3}=\eta^2$. Let $\varepsilon\in\{2,4\}$ be determined by $\zeta^K=\eta^\varepsilon$, where
$K=n/3$ if $\varepsilon=2$, and $K=2n/3$ if $\varepsilon=4$.
Define
\[
P(X)=\eta^\varepsilon X^2-X+1\in\mathbb{Z}[\eta][X],
\]
so that $P(\zeta)=f(\zeta)$. Hence
\[
N_{L/M}(f(\zeta))
=
\operatorname{Res}_X(X^q-\eta,P(X))
=
R_q,
\]
where
\[
R_q=1-\eta S_q+\eta^2
\quad\text{and}\quad
S_q=\beta_1^q+\beta_2^q,
\]
with $\beta_1,\beta_2$ the roots of $P(X)$. Since $M$ is imaginary quadratic, we have
$N_{M/\mathbb{Q}}(x)=|x|^2$ for $x\in M$, and therefore
\[
\operatorname{Res}(f,\Phi_n)
=
N_{M/\mathbb{Q}}(R_q)
=
|R_q|^2.
\]

We now bound $|R_q|$ from below. Since $\eta^2$ is a primitive cube root of
unity, we have $|1+\eta^2|=1$, and hence
\[
|R_q|
=
|-\eta S_q+(1+\eta^2)|
\ge
|S_q|-1.
\]

Moreover, $\beta_1\beta_2=\eta^{2\varepsilon}$ implies
$|\beta_1||\beta_2|=1$, and so
\[
|S_q|
=
|\beta_1^q+\beta_2^q|
\ge
|\beta_1|^q-|\beta_2|^q
=
|\beta_1|^q-|\beta_1|^{-q}.
\]

From the explicit expression for $\beta_1$, one obtains
$|\beta_1|^2>3^{2/3}$, hence $|\beta_1|>3^{1/3}$. Therefore, if $q\ge 9$ then
\[
|\beta_1|^q>(3^{1/3})^9=27,
\qquad
|\beta_1|^{-q}<\frac{1}{27},
\]
so
\[
|S_q|>27-\frac{1}{27}>26.
\]
It follows that
\[
|R_q|\ge |S_q|-1>25,
\]
and hence
\[
\operatorname{Res}(f,\Phi_n)=|R_q|^2>25^2>9.
\]

This proves $\operatorname{Res}(f,\Phi_n)>9$ for $q\ge 9$. For the remaining
case $q=3$, a direct evaluation gives
$\operatorname{Res}(f,\Phi_{18})=9$.
\end{proof}

As a consequence of Lemma~\ref{grow}, we get that $H(n,2\pm n/3)^{ab}$ has nontrivial torsion.

\begin{corollary}\label{special}
Let $n=2\cdot 3^s$ with $s\geq 2$. Then $H(n,2\pm n/3)^{ab}\not\cong \mathbb{Z}^2.$
\end{corollary}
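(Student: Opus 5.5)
The plan is to reduce the statement to Lemma~\ref{lem:v1_reduction} and Lemma~\ref{grow}, and to finish the single remaining small case by a direct computation.

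\textbf{Reduction to the standing assumptions.} Since $0\le m<n$, the congruence $m\equiv 2-n/3\pmod n$ means $m=2+2n/3$. So the two groups in the statement are $H(n,m)$ with $m-2\in\{n/3,2n/3\}$. With $n=2\cdot 3^s$ this gives $m-2=2\cdot 3^{s-1}$ or $m-2=4\cdot 3^{s-1}$. In the notation of Assumptions~\ref{std} we are therefore exactly in the setting $a=u=v=1$, $b\in\{1,2\}$, $r=s-1$.

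We also use the reduction described in the introduction. Here $6\mid n$ and $m\equiv 2\pmod 6$, so $\Phi_6$ divides both $f$ and $g$. Hence $H(n,m)^{\mathrm{ab}}\cong\mathbb{Z}^2$ holds precisely when $\operatorname{Res}(F,G)=1$. If this resultant vanishes, the rank exceeds $2$. If it exceeds $1$, there is nontrivial torsion. So it suffices to show $\operatorname{Res}(F,G)\neq 1$.

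\textbf{Case $s\ge 3$.} Suppose, for contradiction, that $\operatorname{Res}(F,G)=1$. Lemma~\ref{lem:v1_reduction} then gives $\operatorname{Res}(f,\Phi_n)=9$. This contradicts Lemma~\ref{grow}(2), which says $\operatorname{Res}(f,\Phi_n)>9$ when $s\ge 3$.

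\textbf{Case $s=2$.} Here $n=18$ and $m\in\{8,14\}$. Lemma~\ref{grow}(1) gives $\operatorname{Res}(f,\Phi_{18})=9$, so the argument above yields no contradiction. Instead we use the factorization from the proof of Lemma~\ref{lem:v1_reduction}. That proof shows
\[
\operatorname{Res}(F,G)=\operatorname{Res}(f,\Phi_{9})\cdot\frac{\operatorname{Res}(f,\Phi_{18})}{9}=\operatorname{Res}(f,\Phi_9).
\]
It remains to show $N_{\mathbb{Q}(\zeta_9)/\mathbb{Q}}\bigl(f(\zeta_9)\bigr)\neq\pm1$. For $m=8$ we have $f(\zeta)=\zeta^{-1}-\zeta+1$. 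For $m=14$ we have $f(\zeta)=\zeta^{5}-\zeta+1=\zeta^{-4}-\zeta+1$. In each case the norm is an explicit integer. We compute it either by multiplying out over the six conjugates, or by first taking the norm down to the cubic subfield $\mathbb{Q}(\zeta_9+\zeta_9^{-1})$.

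This explicit evaluation is the only step requiring real work. We first try reducing $\operatorname{Res}(t^m-t+1,\Phi_9)$ modulo small primes, for instance $2$ or $3$. The aim is to show quickly that it is not a unit, say by finding a common root of $t^m-t+1$ and $\Phi_9$ over some $\mathbb{F}_p$. If no such root is found, we fall back on direct numerical computation of the norm. This confirms the torsion of $H(18,8)^{\mathrm{ab}}$ and $H(18,14)^{\mathrm{ab}}$.

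Combining the two cases gives $\operatorname{Res}(F,G)\ne 1$ for every $s\ge 2$, which proves the corollary.
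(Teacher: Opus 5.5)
For $s\ge 3$ your argument is the paper's: Lemma~\ref{lem:v1_reduction} forces $\operatorname{Res}(f,\Phi_n)=9$, and Lemma~\ref{grow}(2) rules this out. You are right that for $s=2$ these two lemmas give no contradiction, because Lemma~\ref{grow}(1) returns exactly $9$. The paper's one-line proof does not address this. There the case $n=18$ is covered only by the direct computation $H(18,8)^{\mathrm{ab}}\cong H(18,14)^{\mathrm{ab}}\cong\mathbb{Z}^2\oplus\mathbb{Z}/19\mathbb{Z}$ quoted in Section~\ref{sec:mainproof}. Your reduction $\operatorname{Res}(F,G)=\operatorname{Res}(f,\Phi_9)$ for $n=18$ is valid. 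The factorization in the proof of Lemma~\ref{lem:v1_reduction} is derived before the hypothesis $\operatorname{Res}(F,G)=1$ is used.

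The remaining gap is that you never actually show $\operatorname{Res}(f,\Phi_9)\neq 1$, and the shortcut you try first cannot work. The value is $19$, so $f$ and $\Phi_9$ have no common root in any extension of $\mathbb{F}_2$ or $\mathbb{F}_3$. The only prime that detects it is $19$: $x=5$ is a primitive ninth root of unity in $\mathbb{F}_{19}$, and $5^8-5+1\equiv 0$.

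The cubic-subfield route finishes the case quickly. Let $\zeta$ be a primitive ninth root of unity.
\begin{itemize}
\item For $m=8$, $f(\zeta)=1+\zeta^{-1}-\zeta$, so $f(\zeta)f(\zeta^{-1})=3-(\zeta^2+\zeta^{-2})$.
\item For $m=14$, $f(\zeta)=1+\zeta^{-4}-\zeta$. Since $\zeta^{5}=\zeta^{-4}$, the cross terms cancel and $f(\zeta)f(\zeta^{-1})=3-(\zeta+\zeta^{-1})$.
\end{itemize}
In both cases the subtracted quantity runs over the three roots of $p(y)=y^3-3y+1$, the minimal polynomial of $2\cos(2\pi/9)$. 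Hence
\[
\operatorname{Res}(f,\Phi_9)=\prod_{p(y)=0}(3-y)=p(3)=19 .
\]
So $\operatorname{Res}(F,G)=19\neq 1$, which agrees with the $\mathbb{Z}/19\mathbb{Z}$ torsion reported in the paper. With this computation inserted, your proof is complete.
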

\begin{proof}
This is exactly the situation where $f(t)=t^m-t+1$ with $m=2\pm n/3\pmod n$. Set $K=\pm n/3\pmod n$ and apply Lemma~\ref{lem:v1_reduction} and Lemma~\ref{grow}.
\end{proof}

The following lemma is used to provide, in certain situations, a smaller pair $(n_1, m_1)$ satisfying the same conditions as $(n, m)$. This is particularly useful for minimality arguments where one needs to show that no smaller counterexample exists.

\begin{lemma}\label{minimality}
Under Assumptions~\ref{std}, let $u=\tau v$ for some $\tau\ge 1$. If $a\ge b+2$ or $s\ge r+2$ or $(a=b+1$ and $s=r+1)$, then there exists an integer $n_1$ with $n_1\mid n$ and $n_1<n$, and an integer
$m_1$ such that $2<m_1<n_1$, $m_1\equiv m\pmod{n_1}$, and $m_1\equiv2\pmod6$.
\end{lemma}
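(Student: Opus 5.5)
The plan is to take $n_1$ to be $n/2$ or $n/3$, and $m_1$ to be the least non-negative residue of $m$ modulo $n_1$. We then check three things: $6\mid n_1$, $n_1\nmid m-2$, and that these two facts force $2<m_1<n_1$ and $m_1\equiv 2\pmod 6$.

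The general mechanism is as follows. Suppose $n_1\mid n$, $n_1<n$ and $6\mid n_1$. Let $m_1\in[0,n_1)$ be the residue of $m$ modulo $n_1$, so $m_1\equiv m\pmod{n_1}$ automatically. Since $6\mid n_1$, we get $m_1\equiv m\pmod 6$. By the standing assumptions $m-2=2^b3^ru$ with $b,r\ge1$, so $m\equiv 2\pmod 6$ and therefore $m_1\equiv 2\pmod 6$. In particular $m_1\ge 2$. Moreover, $m_1=2$ happens exactly when $n_1\mid m-2$, because $m-2>0$ and $m_1\equiv m \pmod{n_1}$. Hence it suffices to choose $n_1$ so that some prime-power exponent of $n_1$ strictly exceeds the corresponding exponent of $m-2$ at $2$ or at $3$. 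This gives $n_1\nmid m-2$, hence $2<m_1<n_1$.

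The case analysis then runs as follows.
\begin{itemize}
\item If $a\ge b+2$, take $n_1=n/2=2^{a-1}3^sv$. Then $a-1\ge b+1\ge 2$ and $s\ge1$, so $6\mid n_1$. Also $v_2(n_1)=a-1>b=v_2(m-2)$, so $n_1\nmid m-2$.
\item If $s\ge r+2$, take $n_1=n/3=2^a3^{s-1}v$. Here $a\ge 1$ and $s-1\ge r+1\ge 2$, so $6\mid n_1$. Also $v_3(n_1)=s-1>r=v_3(m-2)$.
\item If $a=b+1$ and $s=r+1$, take $n_1=n/2=2^b3^{r+1}v$. It is divisible by $6$, and $v_3(n_1)=r+1>r$.
\end{itemize}
In every case $n_1<n$ and $n_1\mid n$, as required.

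The one point that needs care is the third case. The tempting choice there is $n_1=n/6=2^b3^rv$, but it fails: with $u=\tau v$ we get $n_1\mid 2^b3^r u=m-2$, which would force $m_1=2$. This is exactly where the hypothesis $u=\tau v$ shows the most economical reduction is unavailable, so one must instead strip off only one prime and keep an excess at the other. The hypothesis $u=\tau v$ is not otherwise needed for the construction. I expect it matters only in how the lemma is later applied, namely to carry the standing assumptions over to the smaller pair $(n_1,m_1)$.
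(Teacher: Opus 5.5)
Your proof is correct and follows the paper's argument. You pick a proper divisor $n_1$ of $n$ with $6\mid n_1$ whose $2$-adic or $3$-adic valuation exceeds that of $m-2$, reduce $m$ modulo $n_1$, and conclude that $m_1\equiv 2\pmod 6$ with $m_1\neq 2$.

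The only difference is the choice of $n_1$, which is immaterial. The paper uses $2^{b+1}3^sv$, $2^a3^{r+1}v$, and in the third case $n/3=2^{b+1}3^rv$, keeping the $2$-adic excess; you use $n/2$, $n/3$ and $n/2$. As you observed, the paper's proof does not use the hypothesis $u=\tau v$ either.
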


\begin{proof}
If $a\ge b+2$, we set $n_1:=2^{b+1}3^s v$. Then $n_1\mid n$ and
$n/n_1=2^{a-b-1}\ge2$, so $n_1<n$.

\medskip
If $s\ge r+2$, we instead set
$n_1:=2^{a}3^{r+1} v$. In this case, $n_1\mid n$ and
$n/n_1=3^{s-r-1}\ge3$, so again $n_1<n$.

\medskip
Finally, if $a=b+1$ and
$s=r+1$, we set $n_1:=2^{b+1}3^{r}v$. Then $n_1\mid n$ and $n/n_1=3$, so $n_1<n$. In all cases, we have $6\mid n_1$. Let $m_1$ be the unique integer with
$0\le m_1<n_1$ and $m_1\equiv m\pmod{n_1}$. Since $6\mid n_1$ and
$m\equiv2\pmod6$, it follows that $m_1\equiv2\pmod6$.

\medskip
Suppose, for contradiction, that $m_1=2$. Then $n_1\mid(m-2)$, and hence $v_2(n_1)\le v_2(m-2)$ and $v_3(n_1)\le v_3(m-2)$. However, by construction
of $n_1$, in each of the three cases, we have either
\[
v_2(n_1)=b+1> b=v_2(m-2)
\]
or
\[
v_3(n_1)=r+1> r=v_3(m-2).
\]
This is a contradiction, hence $m_1\neq2$. Consequently, we have $2<m_1<n_1$.
\end{proof}

\section{Main Result}\label{sec:mainproof}

In this section, we give a proof of Conjecture~1.1. To begin, let us recall the setting. Our aim is to show that $H(n,m)^{ab}$ is isomorphic to $\mathbb{Z}^2$ if and only if $n \equiv 0 \pmod{6}$ and $m \equiv 2 \pmod{n}$. By~ \cite[Lemma 4.2]{MR4418964}, a necessary and sufficient condition for \(\mathbb{Z}\)-rank to be~$2$ is $m \equiv 2 \pmod{6}$ and $n \equiv 0 \pmod{6}$. Hence, we write
\[
n = 2^{a} 3^{s} v \quad \text{and} \quad m = 2^{b} 3^{r} u,
\]
where $\gcd(uv,6)=1$ and $a,s,b,r \geq 1$. 

\medskip
A direct computation shows that
\[
H(18,14)^{\mathrm{ab}} \cong H(18,8)^{\mathrm{ab}} \cong \mathbb{Z}^2 \oplus \mathbb{Z}/19\mathbb{Z}, \qquad
 H(12,8)^{\mathrm{ab}} \cong \mathbb{Z}^2 \oplus \mathbb{Z}/5\mathbb{Z}.
\]
In particular, all three groups have \(\mathbb{Z}\)-rank~2 and nontrivial torsion.
Hence, while the conditions \(n \equiv 0 \pmod{6}\) and \(m \equiv 2 \pmod{6}\) are sufficient for the abelianization to have \(\mathbb{Z}\)-rank~2, they are not sufficient to ensure that it is torsion-free.

\medskip
We use all the results obtained in the previous sections to complete the proof of Conjecture~1.1.

\begin{theorem}
Conjecture \ref{conj} holds.
\end{theorem}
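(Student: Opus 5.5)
We argue by minimal counterexample. The forward direction is \cite[Lemma 4.2]{MR4418964}: if $n\equiv0\pmod 6$ and $m\equiv2\pmod n$, then $H(n,2)^{ab}\cong\mathbb{Z}^2$. For the converse, suppose $H(n,m)^{ab}\cong\mathbb{Z}^2$ with $0\le m<n$ and $m\neq 2$. The rank condition already forces $6\mid n$ and $m\equiv 2\pmod 6$. Torsion-freeness then says that the Smith form of the circulant matrix, after removing the two-dimensional kernel coming from $\Phi_6$, is unimodular, which is exactly $\operatorname{Res}(F,G)=1$. We therefore place ourselves in Assumptions~\ref{std} with $2<m<n$, choose such a pair with $n$ minimal, and aim for a contradiction.

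The first step is to reduce the exponents. By Lemma~\ref{vg1} and Remark~\ref{rems} we have $u=\tau v$. We want the implication $\operatorname{Res}(F,G)=1\Rightarrow\operatorname{Res}(F_1,G_1)=1$ for the pair $(n_1,m_1)$ produced by Lemma~\ref{minimality}. To get it, note that $t^{n_1}-1\mid t^n-1$ and $6\mid n_1$, so $G_1=(t^{n_1}-1)/\Phi_6$ divides $G$. Since $m_1\equiv m\pmod{n_1}$, we have $f\equiv f_1\pmod{t^{n_1}-1}$, and hence $\operatorname{Res}(f,G_1)=\operatorname{Res}(f_1,G_1)$ by \eqref{eq:res-shift}. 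The difficulty is that $G_1$ need not divide $t^{n_1}-1$ coprimely to $\Phi_6$ in the right way. I would handle this by working cyclotomic factor by cyclotomic factor: for each $d\mid n_1$ with $d\ne6$, we have $\operatorname{Res}(f,\Phi_d)=\operatorname{Res}(f_1,\Phi_d)$, because $\zeta_d^m=\zeta_d^{m_1}$. Dividing by $\operatorname{Res}(\Phi_6,\Phi_d)$ gives $\operatorname{Res}(F,\Phi_d)=\operatorname{Res}(F_1,\Phi_d)$. Then
\[
1=\operatorname{Res}(F,G)\ \ge\ \prod_{d\mid n_1,\ d\neq6}\operatorname{Res}(F_1,\Phi_d)=\operatorname{Res}(F_1,G_1),
\]
so $(n_1,m_1)$ is a smaller counterexample. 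Minimality therefore forces $a\le b+1$, $s\le r+1$, and not both equalities.

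The second step is to combine these constraints with $m-2<n$. We have $m-2=2^b3^r\tau v<n=2^a3^sv$, so $\tau\,2^{b-a}3^{r-s}<1$. Since exactly one of $a=b+1$ or $s=r+1$ may hold, and otherwise $a\le b$ and $s\le r$, the left side is at least $\tau\cdot 2^{-1}$ or at least $\tau\cdot 3^{-1}$. The inequality then leaves three options: $\tau=1$, $a=b+1$, $s\le r$ (hence $s=r$), giving $m-2=n/2$; or $\tau\in\{1,2\}$, $s=r+1$, $a=b$, giving $m-2\in\{n/3,2n/3\}$. In the case $m=2+n/2$ we have $a>b$, so Corollary~\ref{bge22-special} disposes of it when $n\ge22$, and $n\in\{6,12,18\}$ are checked directly (for instance $H(12,8)$ has torsion $\mathbb{Z}/5$). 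In the case $m-2\in\{n/3,2n/3\}$ we want to invoke Lemmas~\ref{lem:v1_reduction} and~\ref{grow}. These require $a=v=u=1$, which is not yet established.

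The third step, and the main obstacle, is removing $v$ and extra powers of $2$ in the $n/3$, $2n/3$ cases. Here I would use minimality again with $n_1=n/v$ or $n_1=2\cdot3^s$, checking via the same per-divisor comparison that $m_1=m\bmod n_1$ still satisfies $2<m_1<n_1$. This holds because $m-2=n/3$ or $2n/3$ is not divisible by $n_1$, since $v_3(m-2)=s-1$. So minimality forces $n=2\cdot3^s$, and then $m-2\in\{2\cdot 3^{s-1},4\cdot3^{s-1}\}$ matches Lemma~\ref{lem:v1_reduction} with $b\in\{1,2\}$. Lemma~\ref{grow} gives $\operatorname{Res}(f,\Phi_n)>9$ for $s\ge3$, which is a contradiction. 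The remaining small cases $n=6,18$ are checked directly, for example $H(18,8)$ and $H(18,14)$ have torsion $\mathbb{Z}/19$. This completes the contradiction and proves Conjecture~\ref{conj}.
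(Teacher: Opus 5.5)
Your proposal is correct and follows the paper's route: reduction to $\operatorname{Res}(F,G)=1$, a minimal counterexample controlled by Lemmas~\ref{vg1} and~\ref{minimality}, and the inequality $m-2<n$ forcing $m-2\in\{n/2,n/3,2n/3\}$, handled by Corollary~\ref{bge22-special} and Lemmas~\ref{lem:v1_reduction},~\ref{grow}. One small slip: your ``$\tau\in\{1,2\}$, $a=b$'' should read $\tau=1$, $b-a\in\{0,1\}$.

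Your two extra checks are worth keeping, because the paper lacks them. First, the per-divisor comparison shows that $(n_1,m_1)$ is again a counterexample; Lemma~\ref{minimality} only produces $m_1$, and the main proof uses this fact without proof. Second, the reduction via $n_1=2\cdot 3^s$ is needed: without it, Corollary~\ref{special}, which is stated only for $n=2\cdot 3^s$, does not actually apply in the paper's Cases~(2) and~(4).
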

\begin{proof}
By \cite[Corollary 1.2]{MR4418964} it is enough to show that $\operatorname{Res}(F,G)= 1$, where $F,G$ are as given in Assumptions \ref{std}. We assume that $n\ge 24$ since the conjecture is true for $n\leq 18$. 

\medskip
Suppose in contradiction that $\operatorname{Res}(F,G)=1$ and $n\nmid (m-2)$, so $2<m<n$. Then by  Lemma \ref{vg1} $v|u$. Hence, we have $n = 2^{a}3^{s}v$ and $m-2 = 2^{b}3^{r}\tau v,$ where $b, a,s,r,\tau, v\ge 1$ and $\gcd(6,\tau v)=1$.

\medskip
We assume that $(n,m)$ is a \emph{minimal counterexample}, that is, $n$ is minimal with the properties:
\begin{itemize}
\item $6\mid n$ and $6\mid (m-2)$,
\item $n\nmid (m-2)$, and
\item $H(n,m)^{ab}\cong\mathbb Z^2$.
\end{itemize}

By Lemma~\ref{minimality}, we have
$a\le b+1$
and 
$s\le r+1.$
Moreover, the equalities $a=b+1$ and $s=r+1$ are excluded by
Lemma~\ref{minimality}. On the other hand, the inequalities $a\le b$ and $s\le r$
are excluded by our assumption that $n\nmid(m-2)$: indeed, if $a\le b$ and
$s\le r$, then
\[
2^a3^sv \mid 2^b3^r\tau v,
\]
and hence
\[
n=2^a3^s v \mid 2^b3^r\tau v=m-2,
\]
a contradiction. Therefore, exactly one of the following four cases occurs:
\begin{enumerate}
\item $a=b+1,\ s=r;$
\item $a=b,\ s=r+1;$
\item $a=b+1,\ s<r;$
\item $a<b,\ s=r+1.$
\end{enumerate}

\textbf{Case (1): $a=b+1$ and $s=r$.} In this case,
\[
n=2^{b+1}3^s v,
\qquad
m-2=2^b3^s\tau v.
\]
Since $m<n$, we have $m-2<n$, and hence
\[
2^b3^s\tau v < 2^{b+1}3^s v.
\]
Dividing both sides by $2^b3^s v$ gives $\tau<2$, so $\tau=1$. Therefore,
\[
m-2=2^b3^s v=\frac{n}{2},
\]
and
\[
m=2+\frac{n}{2}.
\]
This contradicts Corollary~\ref{bge22-special}.

\medskip
\textbf{Case (2): $a=b$ and $s=r+1$.} Here,
\[
n=2^b3^{r+1}v,
\qquad
m-2=2^b3^r\tau v.
\]
Again, $m<n$ implies
\[
2^b3^r\tau v < 2^b3^{r+1}v,
\]
and dividing both sides by $2^b3^r v$ gives $\tau<3$. Since $\gcd(\tau,6)=1$, we must have
$\tau=1$. Hence,
\[
m-2=2^b3^r v=\frac{n}{3},
\]
so
\[
m=2+\frac{n}{3}.
\]
This contradicts Corollary~\ref{special}.

\medskip
\textbf{Cases (3) and (4).} We now consider the remaining cases $a=b+1,\ s<r$ and $a<b,\ s=r+1$. Recall that
\[
n=2^a3^s v,
\quad
m-2=2^b3^r\tau v,
\quad
\gcd(\tau v,6)=1.
\]

We first show that $v=1$. Suppose that $v>1$. Set
\[
n'=\frac{n}{v},
\qquad
m'\equiv m \pmod{n'}.
\]
Then $n'<n$, $6\mid n'$, and $H(n',m')^{ab}\cong\mathbb Z^2$. Moreover, in Case~(3)
we have $a=b+1$, and in Case~(4) we have $s=r+1$, so in both cases
\[
n'\nmid(m-2).
\]
Hence, $(n',m')$ is a smaller counterexample, contradicting the minimality of $n$. Therefore, $v=1.$

\medskip

From now on,
\[
n=2^a3^s,
\qquad
m-2=2^b3^r\tau,
\qquad
\gcd(\tau,6)=1.
\]
Since $m<n$, we have
\[
2^b3^r\tau < 2^a3^s.
\]

\medskip
\textbf{Case (3): $a=b+1$ and $s<r$.} Then
\[
n=2^{b+1}3^s,
\qquad
m-2=2^b3^r\tau,
\]
and the inequality $m<n$ becomes
\[
3^{r-s}\tau < 2.
\]
Since $r>s$, we have $r-s\ge1$, so $3^{r-s}\ge3$, and $\tau\ge1$. Hence
$3^{r-s}\tau\ge3$, which is impossible. Therefore, Case~(3) cannot occur.

\medskip
\textbf{Case (4): $a<b$ and $s=r+1$.} Here,
\[
n=2^a3^{r+1},
\qquad
m-2=2^b3^r\tau.
\]
From $m<n$ we obtain
\[
2^{b-a}\tau < 3.
\]
Since $a<b$, we have $b-a\ge1$, so $2^{b-a}\ge2$. Moreover, if $\tau\ne1$ then
$\tau\ge5$, since $\gcd(\tau,6)=1$. This gives $2^{b-a}\tau\ge10$, a contradiction.
Therefore $\tau=1$, so $2^{b-a}<3$,  hence $b-a=1$. So $b=a+1$, and
\[
m-2=2^{a+1}3^r=\frac{2n}{3},
\]
so
\[
m=2+\frac{2n}{3},
\]
which contradicts Corollary~\ref{special}.

\medskip
This shows that there are no counter examples to the conjecture.
\end{proof}

\section{Conclusion}
Having proved Conjecture~\ref{conj}, we have now completed the classification of the groups $G_n(m,k)$ that arise as LOG groups, namely, the Sieradski groups $S(2,n)$ with $n\equiv 0\pmod 6$.

\bibliographystyle{plain}
\bibliography{Refs}

\end{document}